\newcommand{\QED}{\hfill\rule{0.5em}{0.5em}\\}
\newcommand{\QEDc}{\hfill$\Box$\\}
\newcommand{\Mod}[1]{\ (\mathrm{mod}\ #1)}
\newcommand{\ILP}{ILP$^{\mbox{\tiny D}}_{\mbox{\tiny Real}}$ }
\newtheorem{theorem}{Theorem $\!\!$}[section]
\newtheorem{lemma}[theorem]{Lemma $\!\!$}
\newtheorem{remark}[theorem]{Remark $\!\!$}
\newenvironment{proof}[1][Proof]%
{\begin{description}\item[\noindent\textbf{#1}:]}
{\QED\end{description}}
\noindent\textbf{#1}:]}
\QEDc\end{description}}
\newenvironment{keyword}{\textbf{Keywords:}}{}
\title{A Practical Algorithm for the Computation of the Genus}
\author{Gunnar Brinkmann}
\begin{document}
\maketitle

\begin{abstract}
  We describe a practical algorithm to compute the (oriented) genus of a graph, give results of the program implementing this
  algorithm, and compare the performance to existing algorithms.
  The aim of this algorithm is to be fast enough for many applications instead of focusing on the theoretical asymptotic complexity.\\
  Apart from the specific problem and the results, the article can also be seen as an example how some design principles used to
  carefully develop and implement standard backtracking algorithms can still result in very competitive programs.

\end{abstract}

\begin{keyword}
  genus, NP-complete, backtracking
\end{keyword}

\section{Introduction}

Algorithms are studied in two different ways. The first one is as underlying methods of computer programs. These algorithms only
come to their right when implemented as a program and used as a tool. The second way is as objects of research themselves.
In this case the emphasis lies on determining the asymptotic complexity of a problem -- that is: of optimal
algorithms solving the problem -- often without
the intention or realistic possibility to transform the described algorithms to a computer program that can be used.
Even if such algorithms could be implemented and used, many would be extremely inefficient for real problem sizes
and outperform standard algorithms only for problem sizes far beyond the limit where either of them can be used. Of course
there are also some nice cases -- e.g. the linear time planarity algorithm described in \cite{wendy_linear_planar} -- where both
aspects meet and algorithms with the best asymptotic behaviour also perform well in practice.

The general difference between these two approaches can very well be illustrated at the example of the problem of determining the genus of a graph,
which is defined as the genus of the smallest orientable 2-manifold so that the graph can be embedded on its surface
without crossing edges. The problem is NP-complete \cite{genus_NPcomplete}, but for any fixed $g$ there is a linear time
algorithm that can compute the genus $g'\le g$ or decide that it has genus larger than $g$ \cite{linear_genus}. On the other hand, there is
no algorithm implemented and available that is at least guaranteed to compute the genus of a single sparse graph with, say,
80 vertices, or determine that it has genus larger than 20, in one year of CPU time.

Even lately papers have been published that theoretically determine the genus of specific relatively small
graphs or describe algorithms fine tuned for these graphs
\cite{genus_circulants}\cite{conder_genus}\cite{genus_gray}, but in addition to that, researchers have also started to develop general purpose genus
computation programs. In \cite{genusILP1} such an algorithm based on an integer linear programming approach was
published. Later, an improved approach described in \cite{genusILP2} was -- although also being a general purpose algorithm -- able to achieve
many of the results formerly obtained by individual research, automatically in a reasonable amount of time. Unfortunately these
programs are neither publicly available nor easy to use.

In this article we will describe an algorithm that -- in spite of also being exponential already for small genus --
clearly outperforms these approaches and is widely usable. The program based on this algorithm has the
options to compute the genus of a graph, one or all minimum genus embeddings, one or all embeddings on
an orientable surface of given genus, or to filter large lists of graphs for those with genus at most or at least a given bound.
When choosing for {\em all embeddings}, for graphs with a nontrivial symmetry group
isomorphic embeddings can be generated, but no two embeddings that are (labelled) mirror
images are generated.
We use a carefully designed backtracking algorithm.

\section{The algorithm}

We assume all input graphs to be simple connected graphs. The embeddings are constructed by interpreting each
undirected edge as a pair of oppositely directed edges. We build a {\em rotation system},
that is a cyclic ordering of all directed edges starting in a vertex and interpret this ordering as clockwise. Faces of an embedded
graph are defined by the usual face tracing algorithm starting from a directed edge $(v,w)$ and constructing
the face containing $(v,w)$ by going to the inverse edge
$(w,v)$, and then to the next edge in the orientation around $w$. This process is repeated until being back at $(v,w)$.
A face $f$ is thus a set of oriented edges. The set of all vertices contained in one of the directed edges of a face $f$
will be denoted by $f^v$, $f(e)$ will denote the number of directed edges in the face containing the directed edge $e$,
$F$ will denote the set of all such faces, $V$ the set of vertices, and $E$ the set of edges.
The genus $g$ of the embedded graph is $g=1+(|E|-|V|-|F|)/2$. 

\bigskip

\subsection*{Preprocessing:}

Vertices of degree $1$ are irrelevant when computing the genus -- they
can simply be removed without any impact on the genus. Similarly
vertices of degree $2$ can be replaced by an edge connecting their two
neighbours. If this operation produces a double edge, the new edge can be
removed too without changing the genus of the graph. Except when all
embeddings of a graph on a certain genus must be computed and there are at least
three vertices, these operations are recursively applied before the
real computation of the genus begins. This means that e.g. cycles, trees or
complete bipartite graphs $K_{2,n}$ are all reduced to a single vertex.
After having computed an embedding, reduced vertices 
are restored. For graphs with minimum degree at least $3$ -- which is almost
always the case when mathematical research about the genus is done -- this
preprocessing step has of course no impact.

When computing the genus, the algorithm works by first searching for plane embeddings, then embeddings with genus 1, etc.\ until
an embedding is found.
The upper bound for the genus of the embedding that is to be constructed is used in the recursive routine
embedding edges. When trying to embed the graph in genus $g>0$, it has already been determined
that there are no embeddings of genus at most $g-1$ and the first embedding of genus $g$ determines the genus of the graph.
Sometimes -- this also depends on chance -- such an embedding can be found relatively fast and the real bottleneck is the complete search
for embeddings of genus $g-1$. By computing a lower bound on the genus, sometimes expensive complete searches can be avoided, but
the lower bound must be fast to compute in order to have an advantage over the complete search. We will now first describe
a method to compute a (cheap) lower bound:

\bigskip

\subsection*{Computing a lower bound for the genus:}

When embedding a graph $G=(V,E)$, the values of $|V|$ and $|E|$ are fixed, so a minimum genus embedding is in fact an embedding with a maximum number of faces
and if $f'$ is an upper bound on the number of faces in any embedding then $g'=\lceil 1+\frac{|E|-|V|-f'}{2}\rceil $ is a lower bound on the genus.

A trivial upper bound on the number of faces is $\frac{2|E|}{3}$ as all faces have at least three edges. This lower bound is practically
free and is always computed and used. Instead of the constant value $3$, except for trees
one could also use the girth of the graph, but tat would also have to be computed.
The following methods give a better bound if there are few cycles of minimum length.

For a given embedding, let $s[]$ denote the vector of size $2|E|$  indexed from 1 to $2|E|$ containing all values $f(e)$ of directed edges $e$
in non-decreasing order.
Then $|F|=F(s)=\sum_{i=1}^{2|E|}(1/s[i])$. A vector $s'[]$ of size $2|E|$ with $s'[i]\le s[i]$ for $1\le i\le 2|E|$,
is said to be dominated by $s[]$. For a vector $s'[]$ dominated by $s[]$ we have $F(s')=\sum_{i=1}^{2|E|}(1/s'[i])\ge f$.

We call a cyclic sequence $e_0,\dots ,e_{k-1}$ of $k$ pairwise distinct directed edges a {\em facial-like walk} if
and only if for $0\le i<k$ the starting vertex of $e_{i+1\Mod{k}}$ is the end vertex of $e_i$ and $e_{i+1\Mod{k}}$ is the inverse $(e_i)^{-1}$
of $e_i$ if and only if the degree of the end vertex of $e_i$ is one.
A first approximation $s_0[]$ of $s[]$
is obtained by taking for each directed edge $e$ the length $f_w(e)$ of the shortest facial-like walk containing $e$. The value of $f_w(e)$ can be easily computed by
a Breadth First Search.

As each facial walk in an embedded graph
is also a facial like walk, we see immediately that the non-decreasing sequence $s_0[]$ 
is dominated by $s[]$, as $f_w(e)\le f(e)$ for each directed edge $e$. Especially $s_0[2|E|]\le s[2|E|]$ and as in $s[]$ at least
$s[2|E|]$ edges -- all directed edges in a longest facial walk -- have value $s[2|E|]$, we can replace the last $s_0[2|E|]$ values
of $s_0[]$ with $s_0[2|E|]$ and get another sequence $s_1[]$ dominated by $s[]$. We use $F(s_1)$ as a first nontrivial upper bound on the
number of faces.

In fact the length of the shortest facial-like walk is the same for a directed edge and its reverse, but unless the graph is a cycle,
one facial-like walk that does not also contain the reverse edge, can only form a face for at most one of them. This
observation might lead to a better approximation, but
in order to keep the computation of the approximation easy and fast, the length of the shortest facial-like walk is used for a directed edge and its inverse.

\medskip

An angle $\alpha$ of a face is a pair of directed edges, following each
other in the facial walk. The central vertex of the angle is the
endpoint of the first edge -- so except when this vertex has degree
$1$ it is the only common vertex of the two edges. In what follows we
use that for an edge $e$ and its inverse $e^{-1}$ we have
$f_w(e)=f_w(e^{-1})$.\\

Instead of summing over all edges, we can
sum over all angles. With $f(\alpha)$ the size of the face that
contains $\alpha$ and $A(v)$ the set of all angles with central vertex
$v$, we have $|F|=\sum_{v\in V}(\sum_{\alpha \in A(v)} 1/f(\alpha))$. If
for a vertex $v$ the sequence $s'_v[1],\dots ,s'_v[\deg(v)]$ is the
non-decreasing sequence of all $f(\alpha)$ with $\alpha \in A(v)$,
then $|F|=\sum_{v\in V}(\sum_{1\le i \le \deg(v)} 1/s'_v[i])$. Taking
for each vertex $v$ a vector dominated by $s'_v[]$ we again get an
upper bound on $|F|$. If we take for a vertex $v$ and each angle
$\alpha\in A(v)$ instead of $f(\alpha)$ the value $\max\{f_w(e),f_w(e')\}$, with $e,e'$
the edges in the angle, we get a non-decreasing sequence $s'_{0,v}[]$ 
dominated by $s'_v[]$. If $s'_{1,v}[]$ is the non-decreasing sequence of values
of $f_w(e)$ with $e$ starting at $v$, then we define 
$s'_{2,v}[]=s'_{1,v}[2], s'_{1,v}[3], \dots , s'_{1,v}[\deg(v)], s'_{1,v}[\deg(v)]$. So we remove the smallest value
of $s'_{1,v}[]$ and add a copy of the largest value.

\begin{remark}\label{rem:dominatev}

  Let $G=(V,E)$ be an embedded graph. Then for each vertex $v\in V$ the sequence $s'_v[]$
  dominates $s'_{2,v}[]$.

\end{remark}

\begin{proof}

  We know that $s'_v[]$ dominates $s'_{0,v}[]$. We will show that $s'_{0,v}[]$ dominates $s'_{2,v}[]$.
   As the maximum
   values of $s'_{0,v}[]$, $s'_{1,v}[]$ and $s'_{2,v}[]$ are the same, it is sufficient to prove
   $s'_{0,v}[i] \ge s'_{2,v}[i]=s'_{1,v}[i+1]$ for $i<\deg(v)$.
   Let $\alpha_1, \dots ,\alpha_i$ be the angles (that is: pairs of edges) determining the values
   $s'_{0,v}[1], \dots ,s'_{0,v}[i]$ and $S_{i,v}$ be the set of all directed edges starting at $v$, so that $e$ or $e^{-1}$ is in at least one of
   these angles. Then the value of $s'_{0,v}[i]$ is $\max \{ f_w(e)| e\in S_{i,v}\}$ (here we use that $f_w(e)=f_w(e^{-1})$)
  and as $|S_{i,v}|\ge i+1$, we have that $s'_{0,v}[i]\ge s'_{1,v}[i+1]$.

  \end{proof}

We use $\sum_{v\in V}(\sum_{1\le i \le \deg(v)} 1/s'_{2,v}[i])$ as a second nontrivial upper bound on the number of faces.

\bigskip

These upper bounds on the number of faces and the corresponding lower bounds on the genus are relatively fast to compute.
Nevertheless they do not always speed up the program. Especially for small graphs or small genus
they can even slow down the program, as the embedding algorithm can exclude low genus
embeddings very fast. While for few small graphs this is no problem, for large lists of small graphs it can be a problem.
Of course the bounds can never slow down the program much, but can speed it up a lot:

\medskip

In this article, all running times for the C-program {\em multi\_genus} implementing the algorithm described here are on an
{\em Intel Core i7-9700 CPU @ 3.00GHz} (running on one core at 4.4-4.7 Ghz).
Examples for the impact of the computation of a lower bound when computing the genera of graphs are:\\

All bipartite graphs on 14 vertices with degrees between 5 and 6 (73 graphs, genus 3 to 5): without lower bound 60.9 seconds, with lower bound 0.035 seconds. \\
All cubic graphs on 22 vertices (7,319,447 graphs, genus 0 to 3):  without lower bound 300 seconds, with lower bound 364 seconds.\\
Checking 1,000,000 random cubic graphs on 50 vertices, generated by {\em genrang} (which is part of the nauty-package \cite{gtools}) for being planar: without lower bound 18.5 seconds, with lower bound 56.2 seconds.\\
Checking the same 1,000,000 random cubic graphs on 50 vertices for having genus at most $1$: without lower bound 144.8 seconds, with lower bound 85.5 seconds.\\

\bigskip

The default is that the nontrivial bounds are used, but the use can be switched off by an option to multi\_genus.

\subsection*{Constructing an embedding}

We begin by relabeling the graph in a BFS way. The time necessary to compute the genus can differ  a lot for isomorphic graphs
depending on the labelling. In some cases a BFS labelling results in a better performance, in others it slows down the program. We have chosen for
the BFS labeling as the results for different, but isomorphic, input graphs often differ less when always using such a labeling.
An example showing the large differences that can still occur can be seen when
computing a genus 7 embedding (that is a minimum genus embedding) of
$C_3\square C_3 \square C_3$. Taking the first graph of the file \verb+ucay27_05_k=06+ provided by Gordon Royle in his
list of Cayley graphs (and doing BFS), it takes $0.19$ seconds to find an embedding, taking the same graph from a program constructing
cartesian products and not doing BFS, it takes $6.2$ seconds. Taking the graph from the second source and doing BFS, it takes
$281$ seconds. So even when relabeling the graph in a BFS manner the time still depends on the labeling of the input graph.

The algorithm works by first greedily embedding a subgraph so that for each embedding or its mirror image, the induced
embedding of this subgraph is the one constructed. It has genus $0$.
Then we add one edge at a time to the embedding.
If the maximum degree is smaller than $3$, the graph is a path or a cycle,
both of which can be uniquely embedded and only in the plane.
Otherwise we construct the initial subgraph by taking a vertex with minimum degree among all vertices with degree at least $3$,
embedding this vertex and three of its edges in an arbitrary way (thereby fixing the orientation) and greedily extending the three edges
-- one after the other -- to paths until they cannot be made longer. The result of this construction forms the root of the recursion tree.

In branch and bound algorithms, the performance is often improved
if one manages to reduce the branching at every node of the recursion tree.
This is not a mathematical
theorem, but more a rule of thumb, as in some cases more branching might be beneficial if it allows earlier bounding. Nevertheless
in our case we have chosen to take small branching as the base (but not only) criterion for the order in which the edges are inserted.
As an expensive choice of the next edge to insert is sometimes more costly than more branching, we work in three parts:

\begin{description}
\item[(i)] Before the recursion starts, the edges that are still to be embedded
  are sorted as $\{x_1,y_1\},\dots ,\{x_k,y_k\}$, so that with $S_i$ the initial subgraph
  with edges $\{x_1,y_1\},\dots ,\{x_i,y_i\}$ added, 
  for $1\le i <k$, we have for $ \{x_{i},y_{i}\}$ that at least one of $ x_{i},y_{i}$ is in $S_i$ and
  $\deg_{S_{i-1}}(x_{i})\cdot \deg_{S_{i-1}}(y_{i})\le \deg_{S_{i-1}}(x_{j})\cdot \deg_{S_{i-1}}(y_{j})$
    for all $i<j\le k$ for which at least one of $ x_{j},y_{j}$ is in $S_i$.
  If one of $\deg_{S_{i-1}}(x_{i}),\deg_{S_{i-1}}(y_{i})$ is $0$, then also
  $\deg_{S_{i-1}}(x_{i})+\deg_{S_{i-1}}(y_{i})\le\deg_{S_{i-1}}(x_{j})+\deg_{S_{i-1}}(y_{j})$ for all $j$ for which one
  of $deg_{S_{i-1}}(x_{j}),\deg_{S_{i-1}}(y_{j})$ is $0$ and the other is in $S_i$.
  Informally speaking: first all edges leading from the already embedded part to not yet included vertices are added and
  then the edges with both endpoints in the embedded subgraph are added. In each case we choose an edge that has the smallest
  number of possibilities how it can be added if there are no other restrictions.
\end{description}

During the recursion we always have an upper bound \verb+max_genus+ on the genus. As long as this bound is not reached,
edges can be inserted in all possible angles -- sometimes increasing the genus and sometimes not. In order to reach this bound as
fast as possible and therefore be able to reduce the branching, we check at each node of the recursion tree
whether there is an edge that cannot be embedded
in any existing face of the partial embedding -- we call such an edge a {\em critical edge} --
and therefore always increases the genus. If there is such an edge and the partial embedding has already genus
\verb+max_genus+, we can backtrack, otherwise the first such edge in the list gets priority above other edges
that do not have this property and is inserted first.

We distinguish two cases:

\begin{description}
\item[(ii)] If the recursion is still close to the root node and the genus of the partial embedding is still smaller than \verb+max_genus+, we have
  relatively few nodes and the impact of a smaller branching is
  large. In this situation, also more expensive tests can pay and we do not only check for the existence of a critical edge,
  but do in fact look for an edge which has the smallest number of faces into which it can be embedded and take such an edge as the
  next one to be embedded. Among all edges with the same number of faces where they can be embedded, the first one in the
  sorted list is taken. Note that it is possible that an edge can be embedded into a face in more than one way, but this is
  not taken into account when counting the number of faces.

\item[(iii)] Close to the leaves of the recursion tree we have
  many nodes and the impact of a smaller branching is
  small. In this situation, or when the genus of the partial embedding is already \verb+max_genus+,
  we only check for the existence of a critical edge.
\end{description}

The decision when we consider a node of the recursion to be close to the root or close to the leaves has an impact
on the performance, but a simple rule for the optimal moment to switch
is hard to determine. Tests on different kinds of graph
showed that considering nodes where at most half of the edges (edges of the initial tree not counted) are embedded
as close to the root and considering the others as close to a leave is often a good compromise.

The method to find critical edges fast is crucial for the performance. An edge can be embedded into a face if both endpoints
of the edge are in the same face. In the implementation we use bitvectors -- that is: integers of type {\em unsigned long int} (64 bit) or
{\em unsigned \_\_int128} to represent sets of vertices. Especially for graphs with up to $64$ vertices this allows
 to determine whether an edge can be embedded in a face in few CPU cycles -- provided the fact that the set of
end vertices of the edge and the set of vertices of the face are represented as bitvectors. Unless otherwise mentioned,
up to 64 vertices the version using {\em unsigned long int} is used for the timings and the version using {\em unsigned \_\_int128}
for larger graphs on up to 128 vertices.

\begin{lemma}
  Assume that an algorithm to embed a graph $G=(V,E)$ starts with embedding a spanning tree and then 
  inserts the remaining edges
  of a graph step by step, but in each step inserts non-critical edges only if there are no critical edges.
  Let $G'=(V',E')$ be a subgraph with at least one cycle that was embedded by this algorithm and let $e$ be the last
  edge that was inserted in one face $f$ and split it into two faces with vertex sets $f^v_1,f^v_2$.\\
If there is a critical edge $e_c$ for $G'$, then $|e_c\cap (f^v_1\setminus f^v_2)|=|e_c\cap (f^v_2\setminus f^v_1)|=1|$.
\end{lemma}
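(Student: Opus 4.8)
The plan is to exploit the locality of the change caused by inserting $e$, together with the priority rule for critical edges. Write $e_c=\{a,b\}$. The decisive first observation is that $e$ must have been a \emph{non-critical} edge at the moment it was inserted: since it was embedded inside an existing face $f$ and split $f$ into the two faces with vertex sets $f^v_1,f^v_2$, its two endpoints lay in a common face, which is exactly the definition of being non-critical. By the hypothesis on the algorithm, a non-critical edge is inserted only when no critical edge is available. Hence in the partial embedding just before the final step -- that is, the embedding of $G'$ with $e$ removed -- there was \emph{no} critical edge; in particular $e_c$ was not critical there, so some face $g$ of that embedding contained both $a$ and $b$.

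Next I would use that inserting $e$ changes only the face $f$. Adding the directed edges of $e$ merely modifies the rotation at the two endpoints of $e$, within the two angles of $f$ at those endpoints, so every face other than $f$ survives unchanged, and $f$ is replaced by the two faces with $f^v=f^v_1\cup f^v_2$. Since $e_c$ is critical for $G'$, no face of the embedding of $G'$ contains both $a$ and $b$. If the face $g$ found above were different from $f$, it would still be present after inserting $e$ and would still contain both endpoints, contradicting criticality. Therefore $g=f$, and consequently $a,b\in f^v=f^v_1\cup f^v_2$.

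It then remains to locate $a$ and $b$ among $f^v_1$ and $f^v_2$. Criticality of $e_c$ in $G'$ forbids both endpoints from lying together in either new face; that is, we can have neither $\{a,b\}\subseteq f^v_1$ nor $\{a,b\}\subseteq f^v_2$. Combined with $a,b\in f^v_1\cup f^v_2$, a short case check (assuming $a\in f^v_1$ forces $b\in f^v_2\setminus f^v_1$ and then $a\in f^v_1\setminus f^v_2$) shows that exactly one endpoint lies in $f^v_1\setminus f^v_2$ and the other in $f^v_2\setminus f^v_1$. This is precisely the claim that $|e_c\cap(f^v_1\setminus f^v_2)|=|e_c\cap(f^v_2\setminus f^v_1)|=1$.

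The final case analysis is routine; the step that needs the most care is the middle one, namely justifying that inserting $e$ leaves all faces except $f$ untouched and that $f^v_1\cup f^v_2=f^v$. This is a standard property of rotation systems, since the face-tracing only revisits the altered angles of $f$, but it is the hinge that reduces the global criticality condition to a statement about the single split face. I expect the hypothesis that $G'$ contains a cycle to be used only to guarantee that $e$ is a genuine non-tree edge that really splits $f$ into two distinct faces, so that the statement is non-vacuous.
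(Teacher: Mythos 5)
There is a genuine gap, and it is precisely the one the paper's own proof warns about in its first sentence. You have read $e$ as \emph{the last edge inserted}: you identify the partial embedding just before $e$ with ``the embedding of $G'$ with $e$ removed'', and later you treat $f_1$ and $f_2$ as faces of $G'$ (``criticality forbids $\{a,b\}\subseteq f^v_1$''). But the lemma only says that $e$ is the last edge that was inserted \emph{into a face and split it}; after $e$, further edges may well have been inserted -- namely critical ones, each of which has its endpoints in two different faces and therefore \emph{merges} faces rather than splitting one. This generality is exactly what the algorithm needs, since the lemma is used to screen for critical edges at every node of the recursion, including nodes reached after genus-increasing insertions. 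In that general situation three of your steps fail: the embedding before $e$ is some $G_0$ with $G_0+e\subsetneq G'$, so the priority rule does not tell you anything about $G'-e$; a face $g\neq f$ of $G_0$ surviving into $G_1=G_0+e$ does not by itself contradict criticality of $e_c$ \emph{in $G'$}; and $f_1,f_2$ need not be faces of $G'$ at all, so ``$\{a,b\}\subseteq f^v_1$ is forbidden'' is unjustified.

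The missing ingredient, which is the heart of the paper's proof, is a union property: every edge inserted after $e$ was critical, and inserting a critical edge unites two faces, so the vertex set of every face of $G'$ is a union of vertex sets of faces of $G_1$. In particular, every face of $G_1$ -- the untouched faces, and also $f_1$ and $f_2$ -- has its vertex set contained in the vertex set of some face of $G'$. With this property your skeleton can be repaired and becomes the paper's argument: since $e$ was inserted into a face, no critical edge existed for $G_0$, so $e_c\subset f^v_0$ for some face $f_0$ of $G_0$; if $f_0\neq f$ then $f^v_0$ survives into $G_1$ and hence lies inside the vertex set of a face of $G'$, contradicting criticality, so $f_0=f$ and $e_c\subset f^v=f^v_1\cup f^v_2$; and if, say, $e_c\cap(f^v_1\setminus f^v_2)=\emptyset$, then $e_c\subseteq f^v_2$, which again lies inside the vertex set of a face of $G'$, a contradiction. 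Your concluding case analysis is then identical to the paper's. As written, however, your proof covers only the special case in which no edge at all was inserted after $e$.
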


\begin{proof}
  
  Note first that $G'$ need not have faces with vertex sets $f^v_1,f^v_2$. The faces $f_1,f_2$ can not have been subdivided, as
  they are the result of the last subdivision, but after that subdivision they might have been
  united with other faces (or with each other) when
  an edge with endpoint in two different faces was inserted.

  If there is no critical edge, the statement is trivially true, so assume that  there is a critical edge $e_c$. Let $G_0$ be the
  embedded subgraph into which $e$ was embedded to form $G_1$. As $e$ was inserted into a face, there was no critical edge for $G_0$,
  so $e_c$ could be embedded into a face $f_0$ of $G_0$ (so $e_c\subset f^v_0$).
  After $e$ only critical edges were inserted, so the vertex sets
  of all faces of $G'$ are unions of those of $G_1$. If $f\not= f_0$, $f^v_0\subseteq f'^v_0$ for some
  face $f'_0$ of $G'$, so $e_c$ could be embedded into $f'_0$ and would not be critical. So we have $f= f_0$. As 
  $(f^v_1\setminus f^v_2)\cap (f^v_2\setminus f^v_1)=\emptyset$ it is sufficient to show that $e_c\cap (f^v_1\setminus f^v_2)$ and
  $e_c\cap (f^v_2\setminus f^v_1)$ are both not empty. Assume that (w.l.o.g.) $e_c\cap (f^v_1\setminus f^v_2)=\emptyset$.
  As $f^v_1\cup f^v_2=f^v$, we have $e_c\subset f^v_2$, but as $f^v_2$ is a subset of the vertex set of a face of $G'$,
  $e_c$ would not be critical. So $e_c\cap (f^v_1\setminus f^v_2)\not= \emptyset$.
 
\end{proof}

Finding critical edges is a nontrivial task. The straightforward way is a loop over all edges that still need to be
inserted and inside this loop a loop over all faces of the embedded graph. The previous lemma gives a very cheap criterion
to decide for many edges that they are not critical -- without using the inner loop. In fact one could even make a list of all
candidates for critical edges whenever a face is subdivided, but in the implementation this is not done.

\subsection*{Performance}

There are some programs available, where the exact algorithm is not
published -- e.g. \verb+simple_connected_genus_backtracker+ in the
computer algebra package sage. As a backtracking program it seems to
be related to the algorithm described here. The manual says that it is
{\em an extremely slow but relatively optimized algorithm. This is
  “only” exponential for graphs of bounded degree, and feels pretty
  snappy for 3-regular graphs.}  It also says that {\em $K_7$ may take
  a few days}, while multi\_genus takes less than
$0.001$ seconds for $K_7$.  So we tested it only for cubic graphs, but
already for relatively small vertex numbers, it is very slow,
e.g. more than $24$ minutes for the unique cubic graph with girth 8 on
34 vertices (instead of less than $0.001$ seconds of multi\_genus), so tests on a larger scale were not possible.

The program used in the graph database HoG \cite{HoG} at the moment
(it will be replaced by multi\_genus) is much better. It
is a Java program called {\em MinGenusEmbedder} written by Jasper Souffriau
as a student project and it is also a backtracking
algorithm using branch and bound. That program was also used for
independent tests. For the generation of random graphs we use the program {\em genrang}
\cite{gtools} which allows to restrict the generation to regular graphs of a given degree or to
graphs with a given number of edges. As genrang also generates graphs that are not connected, we filtered them
for connected graphs. If we say that we tested $n$ random graphs generated by genrang, this means that
we generated random graphs by genrang and took the first $n$ connected ones.
In order to have the results completely reproducible, we always fixed
the seed used by genrang to 0.

For 2000 random cubic graphs on 30 vertices, MinGenusEmbedder needed 16.3
seconds (compared to 0.6 seconds of multi\_genus -- a
factor of 27) and for 2000 random cubic graphs on 40 vertices, MinGenusEmbedder
needed 435 seconds (compared to 12.5 seconds, a factor of 34). For
larger degrees the ratio grows. For 30 quartic graphs on 30 vertices MinGenusEmbedder already needs
792 seconds (compared to 6.6 seconds, a factor of 120) and for 30 5-regular graphs on 22 vertices
2,226 seconds (compared to 13.7 seconds, a factor of 163). For 6-regular graphs, testing 30 graphs on
19 vertices already took quite some time: 10.56 hours for MinGenusEmbedder and 2 minutes for
multi\_genus (a factor of 315).
Of course such small samples are not sufficient for
reliable results and we should see these numbers just as a hint what the relation of the running times might be. Unfortunately
the running times do not allow tests on large sets of data.

The fastest published general purpose program is the integer linear programming approach described in
\cite{genusILP2} and implemented in the program {\em \ILP }.
The program \ILP is not publicly available, so we compare the running times for a data set they
used in \cite{genusILP2}: the Rome graphs, which can be downloaded from \verb+http://graphdrawing.org/data.html+.
This set of graphs contains 11,534 graphs with (at least indicated by the file names)
up to 100 vertices which are used e.g. for graph drawing
and are said to come from practical applications. Among these graphs, 3 are disconnected and 3,279 planar.
In \cite{genusILP2} only nonplanar graphs were tested. Note that the set of Rome graphs
not only contains isomorphic graphs,
but even identical copies. Some files also seem obscure: e.g. in \verb+grafo6975.39.graphml+ due to the
otherwise used convention, there should be a graph with 39 vertices. Nevertheless it has 105 vertices. We filtered out the
diconnected graphs, the
planar graphs, and the {\em obscure} ones and -- like \cite{genusILP2} -- received a list of 8,249 nonplanar
graphs for which the genus had to be computed. It should be mentioned that the sizes of the
Rome graphs are a bit misleading when it comes to estimating the complexity of computing the genus:
many of the graphs have vertices of degree 1 and 2, which do not increase the complexity of the computation
of the genus. In \cite{genusILP2} a Xeon Gold 6134 CPU was used to compute the genus of these graphs.
For each graph a time limit of 10 minutes and a memory limit of 8 GB was given. With these restrictions
\ILP was able to decide 82\% of the instances. For multi\_genus the memory consumption is negligible.
On the Core i7 it could determine the genus of 98.57\% of the graphs within the same time limit of 10 minutes and even
with a time limit of only 0.25 seconds for each graph, it can decide  83.7\% of the cases.

In Figure~\ref{fig:random} the development of the running times for random cubic and quartic graphs are given. For all sizes the version for more than 64 vertices using {\em unsigned \_\_int128} was used in order not to have some
misleading behaviour around 64 vertices.
As expected, for given fixed degree of the vertices, the measured times depend exponentially on the number of vertices.

\begin{figure}[h!t]
	\centering
	\includegraphics[width=0.6\textwidth]{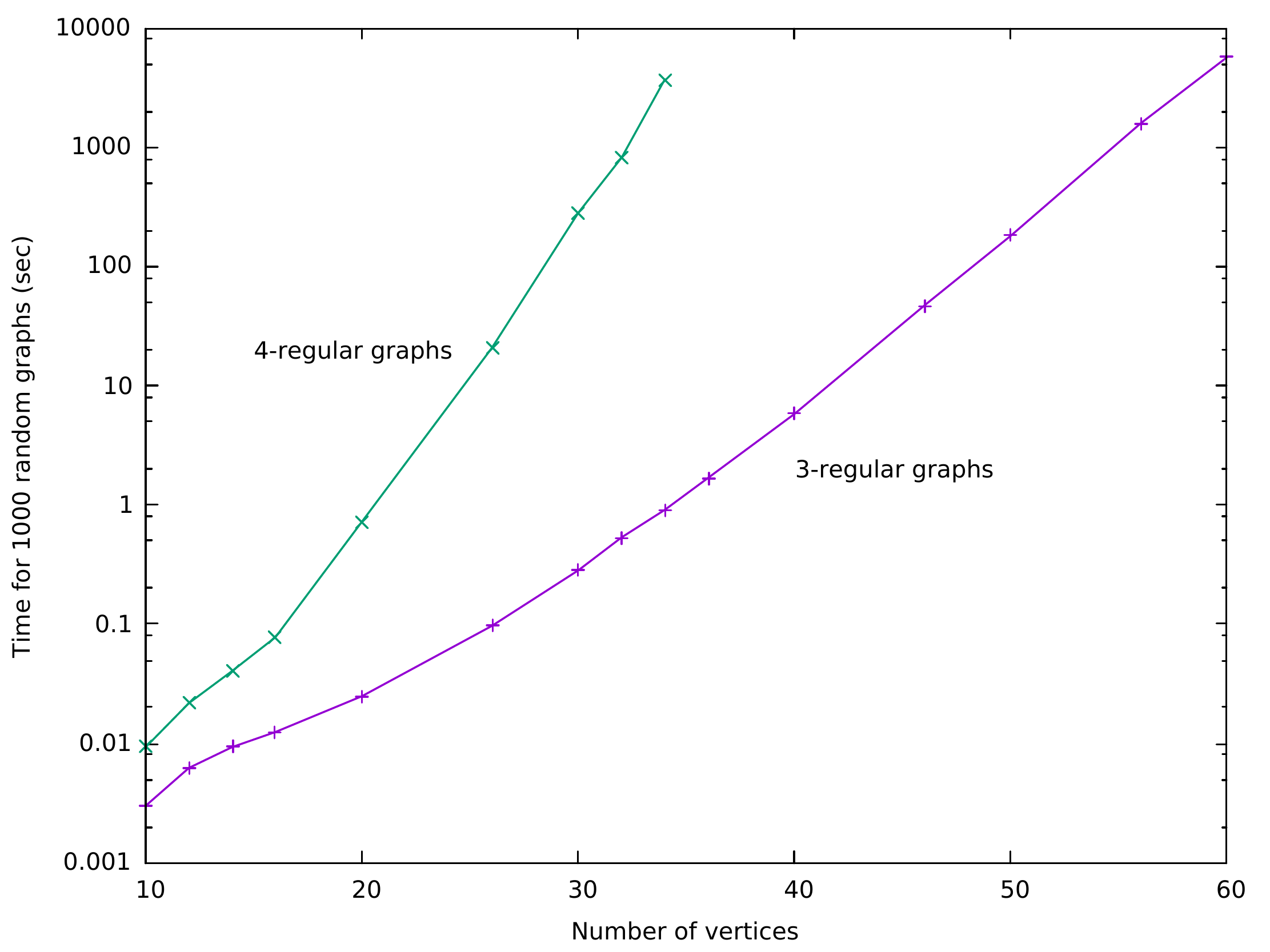}
	\caption{The running times for computing the genus of 1000 random cubic, resp. quartic graphs. Note that the time scale is logarithmic.}
	\label{fig:random}
\end{figure}

If we fix the number of vertices, but vary the number of edges, we get -- as expected -- again an exponential growth,
as shown in Figure~\ref{fig:random_edges}.

\begin{figure}[h!t]
	\centering
	\includegraphics[width=0.6\textwidth]{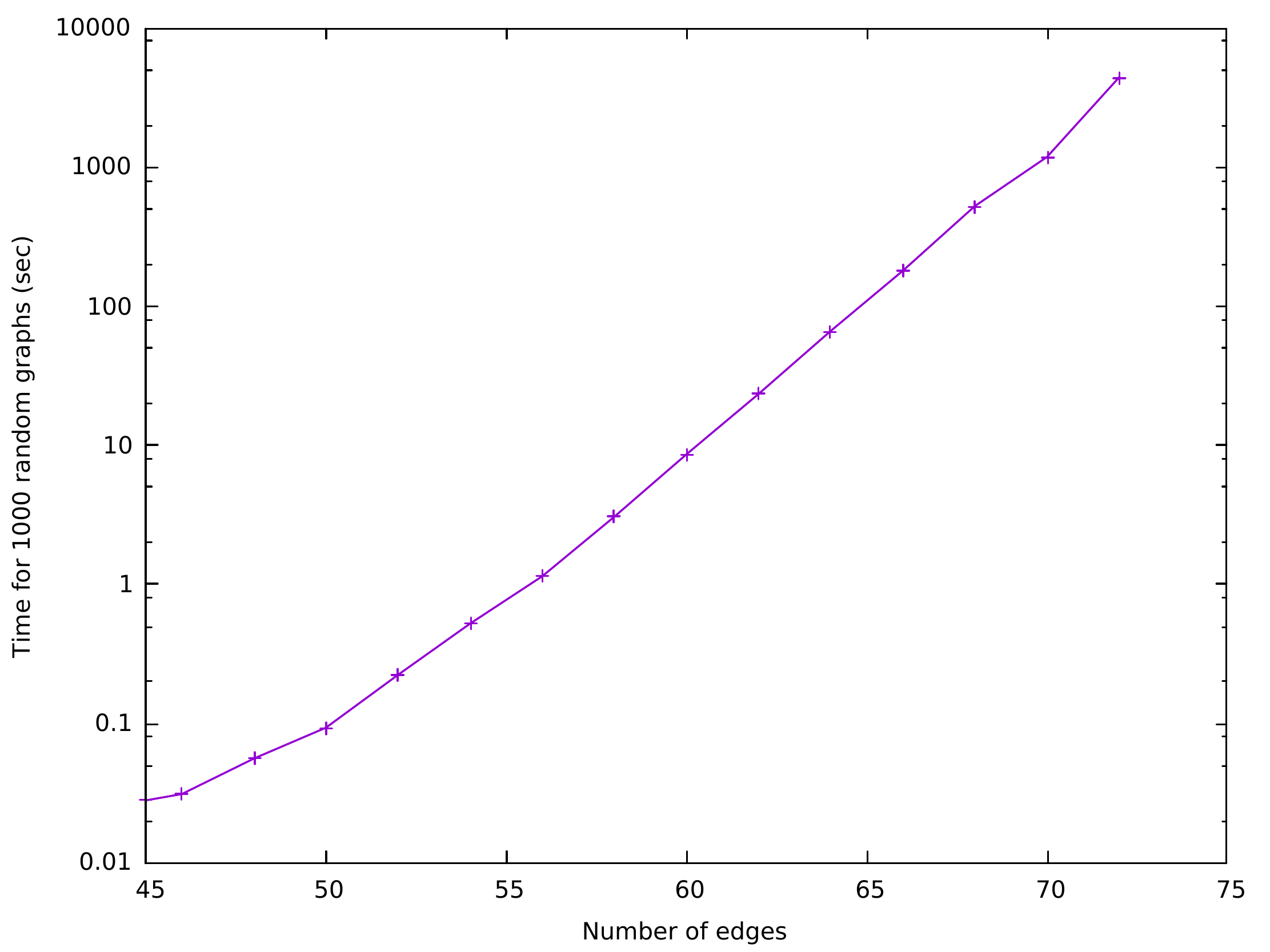}
	\caption{The running times for computing the genus of 1000 random graphs with 32 vertices and a given number of edges. Note that the time scale is logarithmic.}
	\label{fig:random_edges}
\end{figure}

Together with the number of vertices and edges, also the average genus
increases, so it is also interesting to know how the running times develop,
when it is only tested whether the graphs can be embedded in a surface of
given genus -- similar to planarity testing. In Figure~\ref{fig:random_limit} the running times for
testing whether 1000 random cubic resp. quartic graphs can be embedded
in a surface of genus at most 3 are given.  In fact for practically
all of the larger graphs tested, the answer was {\em no}. Nevertheless it is astonishing that from a certain point on
the time necessary to test a graph decreases again.

\begin{figure}[h!t]
	\centering
	\includegraphics[width=0.6\textwidth]{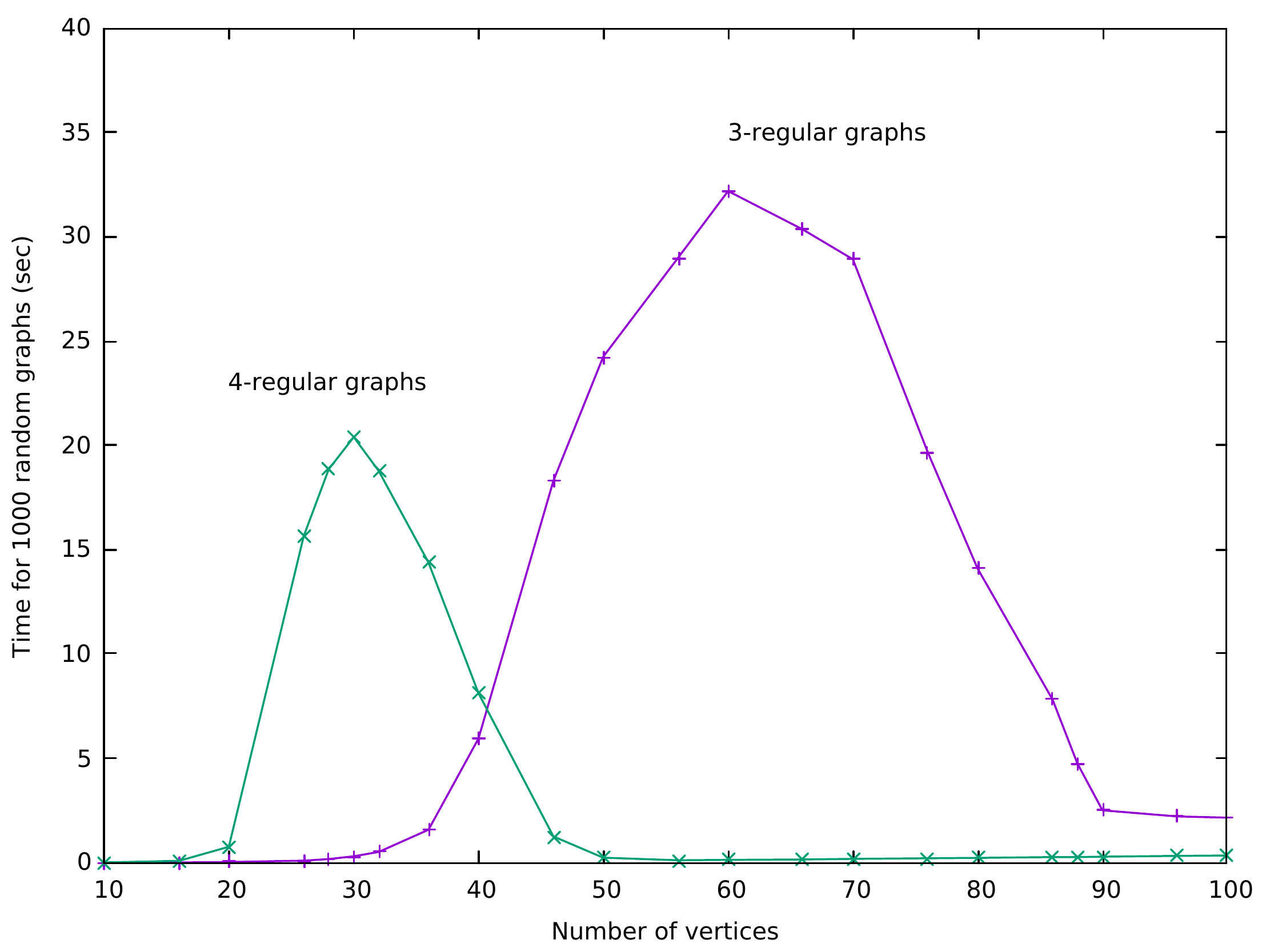}
	\caption{The running times for testing 1000 random cubic, resp. quartic graphs for having genus at most 3.}
	\label{fig:random_limit}
\end{figure}

If one wanted to apply the algorithm to perform very well when testing only planarity, one could apply the reasoning of 
Demoucron, Malgrange and Pertuiset \cite{Demoucron_planar} as soon as a spanning tree and the first cycle is formed.
Of course that would not be useful, as specialized and very efficient algorithms for planarity testing exist.
Although no special adaptation for the planar case is implemented and this is not the task the algorithm was
developed for, a comparison with specialized programs
for planarity testing might be interesting. In \cite{wendy_linear_planar} a practical linear time algorithm for
planarity testing is presented. In the program {\em planarg} \cite{gtools} an implementation 
of this algorithm by Paulette Lieby is available.

Testing 100,000 random cubic graphs on 50 vertices with planarg and
multi\_genus with 0 as an upper bound for the genus, the times are
2.29 (planarg) resp.\ 5.5 (multi\_genus) seconds.  Testing 100,000 random
cubic graphs on 100 vertices, planarg is more than 4 times faster than
multi\_genus (5.0 seconds to 22.1 seconds). All these graphs were
non-planar.  Filtering the 11,529 rome graphs for non-planar ones took
0.36 seconds with planarg and 0.29 seconds with multi\_genus.  In this
case about 28\% of the graphs were planar. In order to test how fast
embeddings are found if they exist, once sparse planar graphs -- that
is cubic graphs -- were tested and once dense graphs -- that is
triangulations. For these tests, for multi\_genus no lower bound for the genus was computed.
We used the 285,914 fullerenes on 100 vertices and
their duals (triangulations on 52 vertices) randomly relabeled in
order to avoid an impact of the embedding produced by the generation
program.  For the fullerenes, planarg needed 19.5 seconds (compared
to 19.1 seconds for multi\_genus) and for the triangulations planarg
needed 11.85 seconds (compared to 11.1 seconds for multi\_genus).

\section*{Testing}

In order to test the implementation, independent programs were used and the results were compared
to the results of multi\_genus. For all cases tested the results agreed.

In order to test the option to compute the genus, the genus of each (connected) graph in the following set was computed by
multi\_genus and  MinGenusEmbedder and the result was compared. The sets of graphs are: all graphs on up to 11 vertices,
all 3-regular graphs on up to 24 vertices,
all 4-regular graphs on up to 16 vertices,
all 5-regular graphs on up to 14 vertices,
all 3-regular graphs on up to 26 vertices with girth at least 5,
all 3-regular graphs on up to 28 vertices with girth at least 6,
all 3-regular graphs on up to 34 vertices with girth at least 7,
all 3-regular graphs on up to 44 vertices with girth 8,
all (3,9)-cages -- that is all 3-regular graphs on 58 vertices with girth 9,
all 4-regular graphs on up to 24 vertices with girth 5, and finally
all graphs with valence vector $(2,2,3,3,3)$.

In order to test the option that makes multi\_genus generate all embeddings of a certain genus,
a simple independent program was implemented that generated all combinations of
all vertex orders around the vertices. Filtering the embeddings generated this way
for those with a given genus we had a very slow but independent test. Then for each graph in one of the following sets,
the range of possible genera was computed by the Euler formula and for each graph 
and each possible genus the embeddings of this genus were independently generated and the 
number of embeddings as well as the number of non-isomorphic embeddings (computed
by an isomorphism checking program using lists) were compared. Due to the enormous
number of embeddings already for small graphs, not too many graphs and no large graphs could be
tested. The sets of graphs are: all 3-regular graphs on up to 18 vertices,
all graphs on 7 vertices with 6 to 17 edges,
all graphs on 8 vertices with 14 and with 15 edges,
all graphs with valence vector $(1,1,1,5)$, and finally
all graphs with valence vector $(0,2,3,2,3)$ and girth at least 4.

\section*{Results obtained or confirmed by multi\_genus}

Times given for multi\_genus in this section
are again on an {\em Intel Core i7-9700 CPU @ 3.00GHz} running on one core at 4.4-4.7 Ghz.

In  \cite{g-unique} Plummer and Zha prove a theorem describing the cases when $K_{c+1}$ is the unique c-connected graph with smallest genus
-- except for the two cases $c=9$ and $c=13$ which are not decided and posed as an open question. This question is answered in
\cite{dualconnectivity} showing that in these cases the complete graphs are not unique, but that in these cases the graphs $M_{c+2}$ on $c+2$ vertices,
obtained by deleting a maximum matching from $K_{c+2}$, have the same genus as $K_{c+1}$.
The embeddings given in that article were computed by multi\_genus. Computing the genus $g(M_{11})=4$ takes $0.005$ seconds and
computing the genus $g(M_{15})=10$ takes $7$ hours and $6$ minutes.

In \cite{genus_circulants} Conder and Grande determine all circulant graphs of genus 1 and 2. A large part of the proof discusses 12 specific circulant graphs
and in order to prove that 11 of these graphs have genus larger than 2, next to several pages of theoretical argumentation also more than 80 CPU hours
were needed. The program described in \cite{genusILP1} confirms these results in 180 hours of CPU time (without additional theoretical arguments), and \ILP
computes the genera {\em in a matter of seconds} (the exact value isn't given).
Multi\_genus confirms the results of the paper in less than $0.03$ seconds. Computing the exact genera (once genus 2, 7 times genus 3, 3 times genus 4, and once genus 5)
and minimum genus embeddings takes $6.4$ seconds.

In \cite{genus_gray} the genus of the Gray graph is theoretically determined by a nontrivial construction.
\ILP confirms this result within $42$ hours.
Multi\_genus confirms this in $28.3$ seconds. In order to determine all $258,696$ (labeled) minimum genus embeddings ($219$ non-isomorphic),
multi\_genus needed a bit less than $10$ minutes. Isomorphism rejection is done by an independent program simply storing canonical
embeddings in lists.

In \cite{conder_genus} the genus (and also non-orientable genus) of several graphs was determined. They describe four specialized approaches
they apply to some special graphs that have in general a large symmetry group.
One of them -- they call it the {\em subgroup orbit method} -- is 
especially suited for as they write {\em graphs on surfaces with a certain degree of symmetry} and works well for graphs that allow an embedding with a
face-transitive automorphism group. So the approach is far from general and the program is also not available for everybody. Our general approach
cannot reproduce their results for the Hoffman-Singleton graph, the Ljubljana graph or the Iofinova-Ivanov graph -- at least not without an
excessive amount of time and/or special adaptations.
Some of the other examples they give can also be solved and sometimes extended
by our general approach without any manual interference -- just by
piping the graph into multi\_genus. In \cite{conder_genus} in general no precise running times are
given. The instances for which the results could be confirmed and sometimes extended are:

\begin{description}

\item{The graph $C_3\square C_3 \square C_3$:} In \cite{conder_genus} it says that {\em with a natural vertex labeling} the subgroup orbit method
  {\em takes only a couple of minutes} to find a genus 7 embedding. The method described here takes -- depending on the labeling
  -- from $0.19$ seconds to $281$ seconds to find an embedding. Of course there may also be labelings that take even less or even more time.
  In total there are 188,211,024 minimum genus embeddings, 145,468 of them pairwise non-isomorphic, but computing these took
  almost 3 weeks of CPU time (on another, much older, machine used for the large memory available for isomorphism rejection).
 
  Not only constructing a genus 7 embedding, but also proving its minimality by excluding the existence of an embedding of smaller genus
  takes between $1.5$ and $4$ hours depending on the labeling (both with BFS numbering first). 

\item{The Tutte graph (or $(3,8)$-cage):} In  \cite{conder_genus} no running times are given, but they construct a genus $4$ embedding with cyclic automorphism group of order $3$. The present approach takes $0.005$ seconds to determine the genus as $4$ and $0.13$ seconds to construct all
  $13,440$ embeddings. Among these embeddings there are $15$ non-isomorphic embeddings -- $4$ with group size $1$, $10$ with group size $2$
  ($2$ of them allowing a reflection) and one with group size $3$.

\item{The Gray graph:} The running times for the Gray graph were already given. In  \cite{conder_genus} it is reported that there are minimum genus
  embeddings with an automorphism group of order 6. Checking all possible embeddings, the result is that there are $186$ non-isomorphic embeddings
  with trivial symmetry, $23$ with group size $2$, $4$ with group size $3$, $4$ with group size $6$, and $2$ with group size $18$.

\item{The Folkman graph:} For the Folkman graph, in \cite{conder_genus} minimum genus embeddings with group size $8$ are constructed.
  The method described here takes less than $0.001$ seconds to determine the genus as $3$ and $0,037$ seconds to construct all $7,680$
  minimum genus embeddings. Among these embeddings there are $7$ pairwise non-isomorphic, $2$ with group size $2$, $3$ with group size $4$,
  and $2$ with group size $8$. All groups contain a reflection.

\item{The Doyle-Holt graph:} For the Doyle-Holt graph  \cite{conder_genus} describes a genus $5$ embedding with an automorohism group of size $2$.
  The present approach needs $0.23$ seconds to determine the genus of the graph and $7.3$ seconds to determine all $1,107$ minimum genus embeddings.
  There are $24$ pairwise non-isomorphic embeddings -- $17$ with trivial group and $7$ with group size 2.

\item{The dual Menger graph of the Gray configuration:} For this 6-regular graph on 27 vertices, the present approach needs $20$ seconds to determine the genus as $6$. In a bit more than $26$ minutes it constructed all $216$ minimum genus embeddings -- which turned out to be isomorphic. So the minimum genus
  embedding of the dual Menger graph of the Gray configuration is unique. It has automorphism group size $6$.

  \end{description}

\bibliographystyle{plain}
\bibliography{../literatur.bib}

\end{document}